\theoremstyle{plain}
\newtheorem{thm}{Theorem}[section]
\newtheorem{cor}[thm]{Corollary}
\newtheorem{lem}[thm]{Lemma}
\theoremstyle{definition}
\newtheorem{defn}[thm]{Definition}
\newtheorem{ex}[thm]{Example}
\theoremstyle{remark}
\newtheorem{rem}[thm]{Remark}
\theoremstyle{setup}
\numberwithin{equation}{section}
\newcommand{\ul}[1]{\underline{#1}}
\DeclareMathAlphabet{\mathpzc}{OT1}{pzc}{m}{it}
\DeclareMathOperator{\Perf}{\mathsf{Perf}}
\def\GP{\mathop{\rm GP}\nolimits}
\DeclareMathOperator{\rad}{\mathsf{rad}}
\DeclareMathOperator{\Coh}{\mathsf{Coh}}
\DeclareMathOperator{\MCM}{\mathsf{MCM}}
\DeclareMathOperator{\injdim}{\mathsf{inj.dim}}
\newcommand{\ind}{\opname{ind}}
\DeclareMathOperator{\Hom}{\mathsf{Hom}}
\DeclareMathOperator{\Ext}{\mathsf{Ext}}
\DeclareMathOperator{\End}{\mathsf{End}}
\newcommand{\opname}[1]{\operatorname{\mathsf{#1}}}
\renewcommand{\mod}{\opname{mod}\nolimits}
\newcommand{\proj}{\opname{proj}\nolimits}
\newcommand{\ca}{{\mathcal A}}
\newcommand{\cc}{{\mathcal C}}
\newcommand{\cd}{{\mathcal D}}
\newcommand{\cf}{{\mathcal F}}
\newcommand{\co}{{\mathcal O}}
\newcommand{\cp}{{\mathcal P}}
\newcommand{\ct}{{\mathcal T}}
\newcommand{\ra}{\rightarrow}
\newcommand{\bsm}{\begin{smallmatrix}}
\newcommand{\esm}{\end{smallmatrix}}
\renewcommand{\mod}{\mathsf{mod}}
\renewcommand{\AA}{\mathbb{A}}
\newcommand{\XX}{\mathbb{X}}
\title{Singularity categories of gentle algebras}
\author{Martin Kalck}
\thanks{The author gratefully acknowledges support by the DFG grant Bu--1866/2--1.}
\subjclass[2000]{Primary  18E30, 16G20; Secondary 16G50, 14J17}
\address{Martin Kalck, The Maxwell Institute, School of Mathematics, James Clerk Maxwell Building, The King's Buildings, Mayfield Road, Edinburgh, EH9 3JZ, UK.}
\email{m.kalck@ed.ac.uk}
\begin{document}

\begin{abstract}
We determine the singularity category of an arbitrary finite dimensional gentle algebra $\Lambda$. It is a finite product of $n$-cluster categories of type $\mathbb{A}_{1}$. Equivalently, it may be described as the stable module category of a selfinjective gentle algebra. If $\Lambda$ is a Jacobian algebra arising from a triangulation $\ct$ of an unpunctured marked Riemann surface, then the number of factors equals the number of inner triangles of $\ct$. 
\end{abstract}

\maketitle

\section{Introduction}
Singularity categories were introduced and studied by Buchweitz \cite{Buchweitz}. Recently, Orlov's global version \cite{Orlov2} attracted a lot of interest in algebraic geometry and theoretical physics: in particular, its relation to Kontsevich's Homological Mirror Symmetry Conjecture. 

For Iwanaga--Gorenstein rings, Buchweitz gave an equivalent description of singularity categories in terms of stable categories of Gorenstein projective modules (also known as maximal Cohen--Macaulay modules), see \cite{Buchweitz} and also Happel \cite{Happel}, Keller \& Vossieck \cite{KV} and Rickard \cite{Rickard}. In particular, singularity categories of selfinjective algebras are equivalent to their stable module categories, which were thoroughly studied in representation theory. X.-W. Chen \cite{Chen} described the singularity categories of artin algebras with radical square zero in terms of projective modules over certain von Neumann regular algebras. He shows that their underlying additive categories are semisimple abelian categories.

 The aim of this note is to describe the singularity categories of another class of finite dimensional algebras - so called gentle algebras (Definition \ref{D:Gentle}). As it turns out, their underlying additive categories are again semisimple. Examples of gentle algebras include tilted algebras of type $\AA_{n}$ \cite{Assem82} and $\widetilde{\AA}_{n}$ \cite{AssemSkowronski} and more generally all algebras which are derived equivalent to gentle algebras \cite{SchroerZimmermann}. Moreover, algebras derived equivalent to $\mathbb{A}_{n}$-configurations of projective lines \cite{Burban} and Jacobian algebras coming from triangulations of unpunctured marked surfaces are gentle. Furthermore, cluster tilted algebras of type $\mathbb{A}_{n}$ and $\widetilde{\AA}_{n}$ are gentle - in fact, they arise from unpunctured marked discs and annuli \cite{ABCP}.
 
Our proof combines Buchweitz' equivalence with the explicit classification of indecomposable modules over gentle algebras which follows (see e.g.~\cite{WW, ButlerRingel}) from work of Ringel \cite{Ringel}, who builds on techniques developed by Gelfand \& Ponomarev \cite{GP} in their study of indecomposable representations of the Lorentz group. More precisely, indecomposable modules are either \emph{string} or \emph{band} modules. Band modules are never submodules of projective modules - in particular, they cannot be Gorenstein projective (GP). We show that string modules are GP precisely if they are projective or left ideals generated by certain arrows. We complete our description of the singularity categories by proving that all non-trivial morphisms between indecomposable GPs factor over projectives. 
\section{Definitions and main result}\label{S:DefMain}
\noindent Let $k$ be an algebraically closed field and let $Q$ be a finite quiver with set of arrows $Q_{1}$. We read elements in the path algebra $kQ$ from right to left. 
\begin{defn}\label{D:Gentle}
A \emph{gentle algebra} is a finite dimensional $k$-algebra $\Lambda=kQ/I$ such that:
\begin{itemize}
\item[(G1)]  At any vertex, there are at most two incoming and at most two outgoing arrows.
\item[(G2)] $I$ is a two-sided admissable ideal, which is generated by paths of length two.
\item[(G3)] For each arrow $\beta \in Q_{1}$,  there is at most one arrow $\alpha \in Q_{1}$ such that 
$0 \neq \alpha\beta \in I$ and at most one arrow $\gamma \in Q_{1}$ such that 
$0 \neq \beta \gamma \in I.$
\item[(G4)] For each arrow $\beta\in Q_{1}$, there is at most one arrow $\alpha \in Q_{1}$ such that 
$\alpha\beta \notin I$ and at most one arrow $\gamma \in Q_{1}$ such that 
$\beta \gamma \notin I.$
\end{itemize}
\end{defn}

\begin{rem}
It is well-known that gentle algebras can be described more conceptually as those finite dimensional algebras with special biserial repetitive algebra, see \cite{AssemSkowronski} and \cite{PS}.
\end{rem}

\begin{ex} \label{Ex:Illustrative} An example of a gentle algebra $\Lambda=kQ/I$  is given by the quiver $Q$
\[
\begin{tikzpicture}[description/.style={fill=white,inner sep=2pt}]
    \matrix (n) [matrix of math nodes, row sep=1.1em,
                 column sep=2.25em, text height=1.5ex, text depth=0.25ex,
                 inner sep=0pt, nodes={inner xsep=0.3333em, inner
ysep=0.3333em}]
    {  1&&2 && 3 && 4 \\
       5&& 6 &&7&& 8 \\
    };
\draw[->] (n-1-1) edge node[fill=white, scale=0.75, yshift=0mm] [midway] {$a$} (n-1-3); 
\draw[->] (n-1-3) edge node[fill=white, scale=0.75, yshift=0mm] [midway] {$b$} (n-1-5);
\draw[->] (n-1-5) edge node[fill=white, scale=0.75, yshift=0mm] [midway] {$c$} (n-1-7);

\draw[->] (n-2-1) edge node[fill=white, scale=0.75, xshift=-3mm] [midway] {$d$} (n-1-1);
\draw[->] (n-2-3) edge node[fill=white, scale=0.75, xshift=-3mm] [midway] {$e$} (n-1-3);

\draw[->] (n-1-3) edge node[fill=white, scale=0.75, yshift=0mm] [midway] {$f$} (n-2-5);
\draw[->] (n-1-7) edge node[fill=white, scale=0.75, yshift=0mm] [midway] {$g$} (n-2-5);

\draw[->] (n-2-7) edge node[fill=white, scale=0.75, xshift=3mm] [midway] {$h$} (n-1-7);

\draw[<-] (n-2-3) edge node[fill=white, scale=0.75, yshift=0mm] [midway] {$j$} (n-2-5);
\draw[<-] (n-2-7) edge node[fill=white, scale=0.75, yshift=0mm] [midway] {$k$} (n-2-5);

\draw[<-] (n-2-1) edge node[fill=white, scale=0.75, yshift=0mm] [midway] {$i$} (n-2-3);   

\end{tikzpicture}
\]
with two-sided ideal $I$ generated by the paths $ba$, $fe$, $jf$, $ej$, $kg$, $hk$ and $gh$.
\end{ex}

Gei{\ss} \& Reiten \cite{GeissReiten} have shown that gentle algebras are \emph{Iwanaga--Gorenstein rings}, i.e.~they have finite injective dimension as left and as right modules over themselves. For any Iwanaga--Gorenstein ring $R$, Zaks \cite{Zaks} has shown that  $\injdim _{R}R=d=\injdim R_{R}$ holds. Following Buchweitz, we call $d$ the \emph{virtual dimension} of $R$ -  for commutative local Noetherian rings, it coincides with the \emph{Krull dimension}.  Inside the category $R-\mod$ of all finite dimensional left $R$-modules, the full subcategory of \emph{Gorenstein projective} $R$-modules
 \begin{align}
\GP(R)=\bigl\{M \in R-\mod\, \big| \Ext^i_{R}(M, R)=0 \text{  for all } i>0 \bigr\}
\end{align}
 is of special interest. Let $M$ and $N$ be finite dimensional left $R$-modules.
We list some well-known facts about Gorenstein projective $R$-modules, see e.g.~Buchweitz \cite{Buchweitz}. \begin{itemize}
\item[(GP1)] A GP $R$-module is either projective or of infinite projective dimension.
\item[(GP2)] $M$ is GP if and only if $M \cong \Omega^d(N)$ for some $N$, where $d$ is the virtual dimension. In particular, every GP module is a submodule of a projective module.
\item[(GP3)] $\GP(R)$ is a Frobenius category with $\proj \GP(R)=\proj-R$.
\end{itemize}
Moreover, the embedding $\GP(R) \subseteq \cd^b(\mod-R)$ induces a triangle equivalence (see  \cite{Buchweitz})
\begin{align}\label{E:Buchweitz}
\frac{\GP(R)}{\proj-R}=:\underline{\GP}(R) \longrightarrow \cd_{sg}(R):=\frac{\cd^b(\mod-R)}{K^b(\proj-R)},
\end{align}
where the triangulated quotient category $\cd_{sg}(R)$ is called the \emph{singularity category} of $R$, see \cite{Buchweitz} and also \cite{Orlov2}. The additive quotient category $\underline{\GP}(R)$ is called the \emph{stable category} of Gorenstein projective $R$-modules. It admits a triangulated structure by Happel's general result on stable categories of Frobenius categories \cite{Happel88}. More precisely, $\underline{\GP}(R)$ has the same objects as $\GP(R)$. Two morphisms in $\GP(R)$ are identified in $\underline{\GP}(R)$ if their difference factors over a projective $R$-module. Moreover, in $\underline{\GP}(R)$ the inverse shift functor $[-1]$ is given by the syzygy functor $\Omega$. 

In order to state the main result of this note, we need to introduce some notations: for a gentle algebra $\Lambda=kQ/I$, we denote by $\cc(\Lambda)$ the set of equivalence classes (with respect to cyclic permutation) of repetition-free cyclic paths $\alpha_{1}\ldots \alpha_{n}$ in $Q$ such that $\alpha_{i}\alpha_{i+1} \in I$ for all $i$, where we set $n+1=1$. Property (G3) implies that for every arrow $\alpha \in Q_{1}$, there is at most one cycle $c \in \cc(\Lambda)$ containing it. Moreover, we write $l(c)$ for the \emph{length} of a cycle $c \in \cc(\Lambda)$, i.e.~$l(\alpha_{1}\ldots \alpha_{n})=n$. We define $R(\alpha)$ to be the \emph{left} \,Êideal $\Lambda\alpha$ generated by $\alpha$. It follows from the definition of gentle algebras that this is a direct summand of the radical $\rad P_{s(\alpha)}$ of the indecomposable projective  $\Lambda$-module $P_{s(\alpha)}=\Lambda e_{s(\alpha)}$, where $s(\alpha)$ is the \emph{start point} of $\alpha$. In fact, all radical summands of indecomposable projectives arise in this way. Moreover, the radicals of indecomposable projectives decompose into at most two direct summands by (G1), see e.g.~\eqref{E:IndProjGentle} for an illustration.
\begin{ex}\label{Ex:Continued}
In Example \ref{Ex:Illustrative}, we have $\cc(\Lambda)=\{jfe, kgh\}$ and
\[
\begin{tikzpicture}[description/.style={fill=white,inner sep=2pt}]
    \matrix (n) [matrix of math nodes, row sep=0.4em,
                 column sep=2.25em, text height=1.5ex, text depth=0.25ex,
                 inner sep=0pt, nodes={inner xsep=0.3333em, inner
ysep=0.1111em}
]
    {   &6& 5&1 &2&7&8 \\
        7&& & &   \\
       &8 \\
    };
\draw[->] (n-2-1) edge node[fill=white, scale=0.75, yshift=0mm] [midway] {$j$} (n-1-2); 
\draw[->] (n-2-1) edge node[fill=white, scale=0.75, yshift=0mm] [midway] {$k$} (n-3-2); 
   
\draw[->] (n-1-2) edge node[fill=white, scale=0.75, yshift=0mm] [midway] {$i$} (n-1-3);  
\draw[->] (n-1-3) edge node[fill=white, scale=0.75, yshift=0mm] [midway] {$d$} (n-1-4); 
\draw[->] (n-1-4) edge node[fill=white, scale=0.75, yshift=0mm] [midway] {$a$} (n-1-5); 
\draw[->] (n-1-5) edge node[fill=white, scale=0.75, yshift=0mm] [midway] {$f$} (n-1-6); 
\draw[->] (n-1-6) edge node[fill=white, scale=0.75, yshift=0mm] [midway] {$k$} (n-1-7); 

\draw[decoration={brace, amplitude=2.5mm}, decorate] ($(n-1-2.west) +(0, 3mm)$) -- node[ scale=0.75, yshift=6.5mm] [midway] {$R(j)$} ($(n-1-7.east)+(0, 3mm)$);

\draw[decoration={brace, amplitude=1.2mm}, decorate] ($(n-3-2.east) +(0, -2mm)$) -- node[ scale=0.75, yshift=-5mm] [midway] {$R(k)$} ($(n-3-2.west)+(0, -2mm)$);

\end{tikzpicture}
\]
describes the indecomposable projective $\Lambda$-module $P_{7}=\Lambda e_{7}$ and its radical summands $R(k)=\Lambda k$ and $R(j)=\Lambda j$. We note, that there is a non-zero morphism $R(k) \ra R(j)$. However, it factors over the projective $P_{7}$ and thus vanishes in the stable category. 

\end{ex}

The following theorem is the main result of this note:

\begin{thm}\label{P:Main}
Let $\Lambda=kQ/I$ be a finite dimensional gentle algebra. Then 
\begin{itemize}
\item[(a)] $\ind \GP(\Lambda) = \ind \proj-\Lambda \cup \{ R(\alpha_{1}), \ldots, R(\alpha_{n}) \big| c= \alpha_{1}\ldots\alpha_{n} \in \cc(\Lambda) \},$

\noindent
where $\ind$ denotes the set of isomorphism classes of indecomposable objects.
\item[(b)] There is an equivalence of triangulated categories 
\begin{align}
\cd_{sg}(\Lambda) \cong \prod_{c \in \cc(\Lambda)} \frac{\displaystyle \cd^b(k-\mod)}{\displaystyle [l(c)]},
\end{align}
where $\cd^b(k-\mod)/[l(c)]$ denotes the triangulated orbit category, see Keller \cite{Keller}. \end{itemize}
\end{thm}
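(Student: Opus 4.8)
The plan is to pass to the stable category $\underline{\GP}(\Lambda)$, which is triangle equivalent to $\cd_{sg}(\Lambda)$ by Buchweitz' equivalence \eqref{E:Buchweitz} — legitimate since $\Lambda$ is Iwanaga--Gorenstein by Gei{\ss}--Reiten — and to compute everything there using the string/band classification together with the characterisation (GP2) that the Gorenstein projectives are exactly the $d$-th syzygies (so every GP module is a submodule of a projective). The computational engine is one syzygy identity: for an arrow $\alpha$ the projective cover of $R(\alpha)=\Lambda\alpha$ is $P_{t(\alpha)}$, and by (G3) its kernel is generated by the unique arrow $\gamma$ with $0\neq\gamma\alpha\in I$, so $\Omega R(\alpha)\cong R(\gamma)$, with $R(\alpha)$ projective when no such $\gamma$ exists.

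For part (a) I would argue both directions. If $c=\alpha_{1}\ldots\alpha_{n}\in\cc(\Lambda)$ then $\alpha_{i}\alpha_{i+1}\in I$, so the identity gives $\Omega R(\alpha_{i+1})\cong R(\alpha_{i})$; hence the $R(\alpha_{i})$ are permuted cyclically by $\Omega$ with period $l(c)=n$, and writing $R(\alpha_{i})\cong\Omega^{d}(R(\alpha_{i+d}))$ (indices mod $n$, $d$ the virtual dimension) exhibits each as a genuine $d$-th syzygy, hence GP by (GP2) and non-projective since $\Omega R(\alpha_{i})\neq 0$. Conversely, let $M$ be an indecomposable non-projective GP module. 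As Gorenstein projectives are submodules of projectives, and every submodule of a projective $\Lambda$-module is a direct sum of string modules, $M$ is a string module; in particular no band module is GP. Since $\underline{\Omega}$ is an autoequivalence of $\underline{\GP}(\Lambda)$, Krull--Schmidt shows $M$ is a direct summand of a syzygy $\Omega M'$ of a GP (hence string) module $M'$, and computing the kernel of a projective cover shows the syzygy of any string module is a sum of ideals $\Lambda p$, each isomorphic via $u\beta\mapsto up$ to $R(\beta)$ for the leftmost arrow $\beta$ of $p$. Thus $M\cong R(\beta)$. Iterating the syzygy identity gives a sequence $\beta=\beta_{0},\beta_{1},\ldots$ with $\beta_{k+1}\beta_{k}\in I$; as $\beta_{k}\mapsto\beta_{k+1}$ is injective by (G3) on the finite set $Q_{1}$, the sequence is either eventually outside its domain — forcing some $R(\beta_{k})$ projective and $M$ of finite projective dimension, against (GP1) — or purely periodic, in which case its reversed period is a cycle in $\cc(\Lambda)$ through $\beta$. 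This gives (a).

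For part (b), (a) identifies the indecomposables of $\underline{\GP}(\Lambda)$ as the $R(\alpha_{i})$ for cycle arrows. The decisive step is that every non-isomorphism among these factors through a projective: using that $R(\alpha_{i})$ is a radical summand of $P_{s(\alpha_{i})}$ and the description of maps between string modules by graph maps, any non-invertible $R(\alpha_{i})\to R(\alpha_{j})$ has image in the radical and lifts through the projective containing the target, exactly as in Example \ref{Ex:Continued}. Hence the $R(\alpha_{i})$ are pairwise orthogonal bricks with $\underline{\End}(R(\alpha_{i}))=k$, and since $\Omega$ preserves each cyclic family the category splits as a product $\prod_{c\in\cc(\Lambda)}\ct_{c}$ of triangulated subcategories, where $\ct_{c}$ consists of the $l(c)$ orthogonal bricks cyclically permuted by the inverse shift $\Omega^{-1}$.

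It then remains to match each block with $\cd^{b}(k-\mod)/[l(c)]$. Since $\cd_{sg}(\Lambda)$ is algebraic it has a dg enhancement, so there is a triangle functor $\cd^{b}(k-\mod)=\per(k)\to\ct_{c}$ sending the generator $k$ to $R(\alpha_{1})$; under it $k[m]\mapsto\Omega^{-m}R(\alpha_{1})=R(\alpha_{1+m})$, so by Keller's universal property it factors as $\cd^{b}(k-\mod)/[l(c)]\to\ct_{c}$, which is essentially surjective by construction and fully faithful since the orbit-category Hom spaces are $k$ when the indices agree mod $l(c)$ and $0$ otherwise — matching the orthogonality and $\underline{\End}=k$ above — and taking the product over $c$ yields the equivalence. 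The hard parts are exactly these last two points: first, the graph-map verification that non-isomorphisms between the $R(\alpha_{i})$ vanish stably, where gentleness (G1)--(G4) is used most delicately to exclude stable maps not visible from the radical-summand picture; and second, upgrading the additive-plus-shift matching to a genuine triangle equivalence, which forces one to invoke the dg enhancement and Keller's construction of triangulated orbit categories rather than any elementary comparison of distinguished triangles.
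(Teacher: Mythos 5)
Your proposal is correct in outline, and for the hard inclusion of part (a) it takes a genuinely different route from the paper. The paper never considers syzygies of general string modules: it analyses directly which strings can sit inside projectives --- Lemma \ref{L:TechnicalV} excludes every word containing a subword $\alpha^{-1}\beta$ (this is also how band modules are eliminated), a separate ``roof'' argument shows that a Gorenstein projective string containing $\alpha\beta^{-1}$ must be projective, and the surviving candidates are exactly the directed strings $R(\alpha)$; after that the paper runs the same finiteness/periodicity argument you give, iterating \eqref{E:Shift} to force $\alpha$ onto a cycle of $\cc(\Lambda)$. You instead exploit that $\ul{\Omega}$ is an autoequivalence of $\ul{\GP}(\Lambda)$ to realise any indecomposable non-projective GP module as a summand of $\Omega(\text{string module})$, and then invoke a structural fact: over a gentle algebra the syzygy of a string module is a direct sum of path ideals $\Lambda p\cong R(\beta)$ (plus projective summands coming from the valleys of the string). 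That reduction is cleaner --- no case analysis on subwords --- but the syzygy-decomposition lemma is the exact counterpart of Lemma \ref{L:TechnicalV} plus the roof argument and carries the same combinatorial weight; it is true, but in your write-up it is asserted (``computing the kernel of a projective cover shows\ldots''), as is the claim that no band module embeds into a projective. In part (b) your skeleton coincides with the paper's (shift permutes the $R(\alpha_i)$ cyclically via \eqref{E:Shift}; stable Homs are $\delta_{\alpha\alpha'}k$; one block per cycle), but you delegate the orthogonality computation to graph-map technology where the paper carries out an explicit case analysis with (G3)/(G4). Conversely, at the very last step you are more careful than the paper: the paper silently identifies the resulting category with the orbit category, while you justify this via Keller's universal property \cite{Keller}, which is legitimate since $\cd_{sg}(\Lambda)$ is algebraic (alternatively, one can argue that on a semisimple additive category with prescribed shift action there is only one triangulated structure up to equivalence).

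To turn the proposal into a complete proof you must supply the two load-bearing lemmas: (i) the decomposition of $\Omega(\text{string module})$ into path ideals and projectives, without which the converse in (a) has no content, and (ii) the stable orthogonality $\ul{\Hom}_\Lambda\bigl(R(\alpha),R(\alpha')\bigr)=\delta_{\alpha\alpha'}\cdot k$, which is where gentleness enters most delicately (your own flagging of this as the hard part is accurate). One small slip: a nonzero non-invertible map $R(\alpha_i)\to R(\alpha_j)$ factors through $P_{s(\alpha_i)}$, the projective whose radical contains the \emph{source} (one extends the map along the inclusion $R(\alpha_i)\subseteq P_{s(\alpha_i)}$), or equivalently lifts along the projective \emph{cover} $P_{t(\alpha_j)}\twoheadrightarrow R(\alpha_j)$ of the target; ``the projective containing the target'' is not the right recipe --- in Example \ref{Ex:Continued} this distinction is masked only because $j$ and $k$ start at the same vertex, so that $P_7$ contains both radical summands.
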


We prove this result in Section \ref{S:Proof}.

\begin{rem}\label{R:Main}
The triangulated orbit category $\cd^b(k-\mod)/[n]$ is also known as the $(n-1)$-\emph{cluster category} of Dynkin-type $\mathbb{A}_{1}$, see e.g.~H.~Thomas \cite{Thomas}. Moreover, it is triangle equivalent to the \emph{stable module category} $I_{n}-\ul{\mod}$ of the selfinjective gentle algebra $I_{n}=kC_{n}/\ca^2$, where the quiver $C_{n}$ is an oriented cycle with $n$ vertices and $\ca \subseteq kC_{n}$ is the two-sided ideal generated by all arrows in $C_{n}$. The $I_{n}$ are \emph{uniserial} (or \emph{Nakayama}) algebras and are in fact the only indecomposable gentle algebras which are \emph{selfinjective}.

As additive categories, the orbit categories $\cd^b(k-\mod)/[n]$ are equivalent to the semisimple abelian categories $k^n-\mod$. In particular, the singularity categories of gentle algebras are semisimple abelian when viewed as additive categories. Another class of finite dimensional algebras with semisimple singularity categories are the algebras with radical square zero, see X.-W. Chen \cite{Chen}.
\end{rem}

\section{Applications and Examples} \label{S:Examples}

\begin{cor}\label{C:Invariant} Let $\Lambda$ and $\Lambda'$ be gentle algebras. If there is an equivalence of triangulated categories $\cd^b(\Lambda-\mod) \cong \cd^b(\Lambda'-\mod)$, then there is a bijection of sets
\begin{align}
f \colon \cc(\Lambda) \stackrel{\sim}\longrightarrow \cc(\Lambda'),
\end{align}
such that $l(c)=l(f(c))$ for all $c \in \cc(\Lambda)$.
\end{cor}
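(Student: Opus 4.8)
The plan is to derive Corollary \ref{C:Invariant} as a direct consequence of the main theorem, Theorem \ref{P:Main}(b). The key observation is that a derived equivalence $\cd^b(\Lambda-\mod) \cong \cd^b(\Lambda'-\mod)$ induces an equivalence of the associated singularity categories, since $\cd_{sg}(\Lambda)$ is by definition the Verdier quotient of $\cd^b(\mod-\Lambda)$ by the thick subcategory $K^b(\proj-\Lambda)$ of perfect complexes. The point is that any triangle equivalence of bounded derived categories of finite dimensional algebras restricts to an equivalence on the subcategories of perfect complexes, because perfect complexes are intrinsically characterized inside $\cd^b$ (for instance as the compact objects, or equivalently those objects $X$ such that $\Hom(X,-)$ commutes with coproducts after passing to the unbounded derived category). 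Consequently the quotients are equivalent, giving $\cd_{sg}(\Lambda) \cong \cd_{sg}(\Lambda')$.

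First I would invoke this to obtain a triangle equivalence between the two singularity categories. Then, by Theorem \ref{P:Main}(b), both sides decompose as finite products of orbit categories, so the task becomes extracting from the abstract equivalence
\[
\prod_{c \in \cc(\Lambda)} \frac{\cd^b(k-\mod)}{[l(c)]} \cong \prod_{c' \in \cc(\Lambda')} \frac{\cd^b(k-\mod)}{[l(c')]}
\]
a length-preserving bijection of the index sets. The strategy here is to show that the indecomposable factors of such a product are determined up to equivalence by their length $l(c)$, and that the factors themselves are recoverable from the product. Concretely, each factor $\cd^b(k-\mod)/[n]$ is indecomposable as a triangulated category (it has no nontrivial product decomposition, since its Grothendieck group or its set of indecomposable objects is connected in the appropriate sense), so a product decomposition into such factors is essentially unique. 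An equivalence must therefore permute the factors, yielding the desired bijection $f$.

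It then remains to check that matched factors have equal length. The plan is to recover $l(c)$ as a triangulated invariant of the factor $\cd^b(k-\mod)/[l(c)]$. By Remark \ref{R:Main} this orbit category is triangle equivalent to the stable module category $I_{n}-\ul{\mod}$ of the Nakayama algebra $I_{n}=kC_{n}/\ca^2$ with $n=l(c)$, and its indecomposable objects number exactly $n$ (the stable simples, or equivalently the orbits of $\cd^b(k-\mod)$ under $[n]$). Thus the number of indecomposable objects in the factor equals $l(c)$, and this number is manifestly preserved by any triangle equivalence. Matching factors under the permutation induced above therefore forces $l(c) = l(f(c))$.

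The main obstacle I anticipate is the uniqueness of the product decomposition into indecomposable triangulated factors, i.e.\ the step guaranteeing that an equivalence of finite products of indecomposable triangulated categories permutes the factors. One must argue carefully that each orbit category $\cd^b(k-\mod)/[n]$ is connected (indecomposable) as a triangulated category and that the decomposition in Theorem \ref{P:Main}(b) realizes the block decomposition of the singularity category, so that the blocks are canonically defined and hence preserved under any equivalence. Once this bookkeeping is in place, the length-preservation is immediate from counting indecomposables as above.
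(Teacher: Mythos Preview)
Your approach is correct and matches the paper's proof: derive an equivalence of singularity categories from the derived equivalence, then apply Theorem~\ref{P:Main}(b). The paper's proof is in fact just these two sentences, leaving entirely implicit the bookkeeping you spell out (intrinsic characterization of perfect complexes, indecomposability of each orbit factor, and recovery of $l(c)$ as the number of indecomposables in a block); your detailed justification of these steps is sound and is exactly what the paper is tacitly assuming.
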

\begin{proof}
The derived equivalence $\cd^b(\Lambda-\mod) \cong \cd^b(\Lambda'-\mod)$ yields a triangle equivalence $\cd_{sg}(\Lambda)\cong \cd_{sg}(\Lambda')$. Now Theorem \ref{P:Main} completes the proof.
\end{proof}

\begin{rem}
Corollary \ref{C:Invariant} recovers parts of a derived invariant for gentle algebras, which was introduced by Avella-Alaminos \& Gei{\ss} \cite{GeissAvellaAlaminos}. More precisely, our result shows that 
\begin{align}
\bigl.\phi_\Lambda\bigr|_{0 \times \mathbb{N}} = \bigl.\phi_{\Lambda'}\bigr|_{0 \times \mathbb{N}}\,\, , 
\end{align}
where $\phi_\Lambda, \phi_{\Lambda'}\colon \mathbb{N}^2 \to \mathbb{N}$ are the invariants of \cite{GeissAvellaAlaminos} associated with $\Lambda$ and $\Lambda'$, respectively.
\end{rem}

\begin{rem}
Buan \& Vatne \cite{BuanVatne} show the converse of Corollary \ref{C:Invariant} for two cluster tilted algebras $\Lambda$ and $\Lambda'$ of type $\mathbb{A}_{n}$ for some fixed $n \in \mathbb{N}$. In other words, two such algebras are derived equivalent if and only if their singularity categories are triangle equivalent. This result generalises to $m$-cluster tilted algebras of type $\mathbb{A}_n$ by work of Murphy \cite{Murphy}.
\end{rem}

The following geometric example was pointed out by Igor Burban.
\begin{ex}
Let $\mathbb{X}_{n}$ be a chain of $n$ projective lines  
\begin{align}\label{E:ChainOfProjLines}
\begin{array}{c}
\begin{tikzpicture} 
\draw (0,0,0) to [bend left=25]  (2,0,0);
\node[scale=0.75] at (1, 0.5, 0) {$C_{1}$};
\node[scale=0.75] at (1.75, -0.25, 0) {$s_{1}$};
\draw (1.5,0,0) to [bend left=25]  (3.5,0,0);
\node[scale=0.75] at (2.5, 0.5, 0) {$C_{2}$};
\node at (4, 0, 0) {$\cdots$}; 
\draw (4.5,0,0) to [bend left=25]  (6.5,0,0);
\node[scale=0.75] at (5.5, 0.5, 0) {$C_{n-2}$};
\node[scale=0.75] at (6.25, -0.25, 0) {$s_{n-2}$};
\draw (6,0,0) to [bend left=25]  (8,0,0);
\node[scale=0.75] at (7, 0.5, 0) {$C_{n-1}$};
\node[scale=0.75] at (7.75, -0.25, 0) {$s_{n-1}$};
\draw (7.5,0,0) to [bend left=25]  (9.5,0,0);
\node[scale=0.75] at (8.5, 0.5, 0) {$C_{n}$};
\end{tikzpicture}
\end{array}
\end{align}
Using Buchweitz' equivalence \eqref{E:Buchweitz}  and Orlov's localization theorem \cite{Orlov1}, the singularity category of $\mathbb{X}_{n}$ may be described as follows
\begin{align}\label{E:SingChain}
\bigl( \cd_{sg}(\mathbb{X}_{n}) \bigr)^{\omega}:= \left(\frac{\displaystyle \cd^b(\Coh \mathbb{X}_{n})}{\displaystyle \Perf(\XX_{n})}\right)^\omega \cong \bigoplus_{i=1}^{n-1} \ul{\MCM}\bigl(O_{nd}\bigr) \cong  \bigoplus_{i=1}^{n-1} \frac{\displaystyle \cd^b(k-\mod)}{\displaystyle [2]},
\end{align}
where $(-)^{\omega}$ denotes the idempotent completion \cite{BalmerSchlichting} and $\ul{\MCM}(O_{nd})$ denotes the stable category of maximal Cohen--Macaulay modules over the nodal singularity $O_{nd}=k\llbracket x, y \rrbracket/(xy)$. 

In particular, there is a fully faithful triangle functor
\begin{align}\label{E:WithoutCompl}
\cd_{sg}\bigl(\mathbb{X}_{n}\bigr) \longrightarrow \bigoplus_{i=1}^{n-1} \ul{\MCM}\bigl(O_{nd}\bigr),
\end{align}
 which is induced by
\begin{align}\label{E:LocalizationChain}
\cd^b(\Coh \mathbb{X}_{n}) \ni \cf  \longmapsto \left(\widehat{\cf}_{s_{1}}, \ldots,\widehat{\cf}_{s_{n-1}}\right) \in \bigoplus _{i=1}^{n-1} O_{nd}-\mod,
\end{align}
where $s_{1}, \ldots, s_{n-1}$ denote the singular points of $\mathbb{X}_{n}$.
For $1\leq l \leq m \leq n$, let $\co_{[l,m]}$ be the structure sheaf of the subvariety $\bigcup_{k=l}^m C_{k} \subseteq \mathbb{X}_{n}$. Here, the $C_{i}$ denote the irreducible components of $\mathbb{X}_{n}$ as shown in \eqref{E:ChainOfProjLines}. Then \eqref{E:LocalizationChain} maps $\co_{[1, i]}$ to $(O_{nd}, \ldots, O_{nd}, k\llbracket x \rrbracket, 0, \ldots, 0)$ and $\co_{[j, n]}$ to $( 0, \ldots, 0, k\llbracket y \rrbracket, O_{nd}, \ldots, O_{nd})$, where $k\llbracket x \rrbracket$  and  $k\llbracket y \rrbracket$ are located in the $i$-th and $j$-th place, respectively. In particular, the functor in \eqref{E:WithoutCompl} is essentially surjective. Therefore, the singularity category $\cd_{sg}(\mathbb{X}_{n})$ is idempotent complete.

We explain an alternative approach to obtain the equivalence \eqref{E:WithoutCompl}, which uses and confirms Theorem \ref{P:Main}. Burban \cite{Burban} showed  that $\cd^b(\Coh \XX_{n})$ has a tilting bundle with endomorphism algebra $\Lambda_{n}$ given by the following quiver
\[
\begin{tikzpicture}[description/.style={fill=white,inner sep=2pt}]
    \matrix (n) [matrix of math nodes, row sep=1.7em,
                 column sep=2.25em, text height=1.5ex, text depth=0.25ex,
                 inner sep=0pt, nodes={inner xsep=0.3333em, inner
ysep=0.3333em}]
    {  &&&& 0 \\
       1 && 2 && \cdots && n-1 && n \\
    };
\draw[->] (n-1-5) edge  [bend right=10] node[fill=white, scale=0.75, yshift=0mm] [midway] {$c_{1}$} ($(n-2-1.north)+(0, -0.75mm)$); 
  
\draw[->] (n-1-5)  edge [bend left=10]  node[fill=white, scale=0.75, yshift=0mm] [midway] {$c_{2}$} ($(n-2-9.north)+(0, -1mm)$); 

\draw[->] (n-2-1) edge [bend left=20] node[fill=white, scale=0.75, yshift=0mm] [midway] {$a_{1}$} (n-2-3); 
\draw[->] (n-2-3) edge [bend left=20] node[fill=white, scale=0.75, yshift=0mm] [midway] {$a_{2}$} ($(n-2-5.west)+(0, 1mm)$); 
\path[->]  ($(n-2-5.east)+(0, 1mm)$) edge [bend left=20] node[fill=white, scale=0.75, yshift=0mm] [midway] {$a_{n-2}$} ($(n-2-7.west)+(0, 1mm)$); 
\path[->] ($(n-2-7.east)+(0, 1mm)$) edge [bend left=20] node[fill=white, scale=0.75, yshift=-0.75mm] [midway] {$a_{n-1}$}  ($(n-2-9.west)+(0, 1mm)$); 

\draw[<-] (n-2-1) edge [bend right=20] node[fill=white, scale=0.75, yshift=0mm] [midway] {$b_{1}$} (n-2-3); 
\draw[<-] (n-2-3) edge [bend right=20] node[fill=white, scale=0.75, yshift=0mm] [midway] {$b_{2}$} ($(n-2-5.west)+(0, -1mm)$); 
\path[<-]  ($(n-2-5.east)+(0, -1mm)$) edge [bend right=20] node[fill=white, scale=0.75, yshift=0mm] [midway] {$b_{n-2}$} ($(n-2-7.west)+(0, -1mm)$); 
\path[<-] ($(n-2-7.east)+(0, -1mm)$) edge [bend right=20] node[fill=white, scale=0.75, yshift=1mm] [midway] {$b_{n-1}$} ($(n-2-9.west)+(0, -1mm)$); 

\end{tikzpicture}
\]
with relations $a_{i}b_{i}=0=b_{i}a_{i}$ for all $1 \leq i \leq n-1$. Hence we have a triangle equivalence $\cd^b(\Coh \XX_{n}) \ra \cd^b(\mod-\Lambda_{n})$ inducing an equivalence of triangulated categories
\begin{align}
\cd_{sg}(\XX_{n}) \xrightarrow{\sim} \cd_{sg}(\Lambda_{n}).
\end{align}
Since $\Lambda_{n}$ is a gentle algebra, we can apply Theorem \ref{P:Main}. $\cc(\Lambda_{n})$ consists of $n-1$ cycles of length two. Therefore $\cd_{sg}(\Lambda_{n})$ is equivalent to the right hand side of \eqref{E:SingChain}. In particular, we see again that the singularity category $\cd_{sg}(\XX_{n})$ is idempotent complete.
\end{ex}

Assem, Br\"ustle, Charbonneau-Jodoin \& Plamondon \cite{ABCP} studied a class of gentle algebras $A(S, \ct)$ arising from triangulations $\ct$ of marked Riemann surfaces $S$ without punctures. In particular, they show that the `inner triangles' of $\ct$ are in bijection with the elements of $\cc(A(S, \ct))$, which in this case are all of length three. This has the following consequence.

\begin{cor}\label{C:Jacobian}
In the notation above, the number of direct factors of the singularity category $\cd_{sg}(A(S, \ct))$ equals the number of inner triangles of $\ct$.  
\end{cor}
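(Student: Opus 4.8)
The plan is to deduce the corollary directly from the product decomposition in Theorem~\ref{P:Main}(b) via the combinatorial dictionary provided by \cite{ABCP}. First I would note that $A(S,\ct)$ is a finite dimensional gentle algebra by \cite{ABCP}, which is precisely the input required by Theorem~\ref{P:Main}. Part~(b) of that theorem then gives
\[
\cd_{sg}(A(S,\ct)) \cong \prod_{c \in \cc(A(S,\ct))} \frac{\cd^b(k-\mod)}{[l(c)]},
\]
so that the number of direct factors is controlled entirely by the cardinality of the indexing set $\cc(A(S,\ct))$.

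Next I would invoke the result of \cite{ABCP} recalled immediately above the corollary: the inner triangles of $\ct$ are in bijection with the elements of $\cc(A(S,\ct))$ (and each such cycle has length three). Substituting this into the displayed decomposition, the number of direct factors is $|\cc(A(S,\ct))|$, which equals the number of inner triangles. Note that the count is taken with multiplicity: since every cycle has length three, all factors are isomorphic to the single orbit category $\cd^b(k-\mod)/[3]$, but we count one factor for each element of $\cc(A(S,\ct))$.

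The only point needing a word of justification is that each factor of the product is nonzero, so that the number of factors is genuinely $|\cc(A(S,\ct))|$. This is immediate from Remark~\ref{R:Main}: for each $c$ the orbit category $\cd^b(k-\mod)/[l(c)]$ is additively equivalent to $k^{l(c)}-\mod$, which is nonzero since $l(c)=3$. I expect no real obstacle here; the corollary is essentially a transcription of Theorem~\ref{P:Main} through the geometric combinatorics of \cite{ABCP}, and the main (and entirely routine) step is simply chaining the two bijections together.
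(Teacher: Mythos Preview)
Your proposal is correct and follows exactly the approach the paper intends: the corollary is stated without proof, as an immediate consequence of Theorem~\ref{P:Main}(b) together with the bijection from \cite{ABCP} recalled in the sentence preceding it. Your added remark that each factor is nonzero is a reasonable sanity check but not something the paper bothers to mention.
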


\begin{ex}
A prototypical case is the hexagon $S$ with six marked points on the boundary. We consider the following triangulation $\ct$ with exactly one inner triangle.
\[
\begin{tikzpicture}
\foreach \l in {1.5}
\foreach \ha in {0.52}
\foreach \hb in {0.48}
\foreach \h in {0.5}
\foreach \cp in {1.5} 
\foreach \s in {-0.86602540378} 
{
\foreach \a in {0, -120} \draw[thick=40mm] (0, 0) -- +(\a:\l); 
\draw[thick=40mm] (\l, 0) -- +(300:\l); 
\draw[thick=40mm] ($\l*(\cp, \s )$) -- +(-120:\l); 
\draw[thick=40mm] ($2*\l*(0.5, \s)$) -- +(180:\l); 
\draw[thick=40mm] ($2*\l*(0, \s)$) -- +(120:\l); 

\draw (0,0) -- ($\l*(\cp, \s)$);
\draw ($2*\l*(0,\s)$) -- ($\l*(\cp, \s)$);
\draw ($2*\l*(0,\s)$) -- (0,0);
\path[->] ($\ha*\l*(\cp, \s)$) edge [bend right=10] node[fill=white, scale=0.55] [midway] {$\alpha_{1}$} ($\hb*\l*(-\cp,\s)+\l*(\cp,\s)$); 

\path[->] ($\ha*\l*(-\cp,\s)+\l*(\cp,\s)$) edge [bend right=10] node[fill=white, scale=0.55] [midway] {$\alpha_{2}$} ($\ha*2*\l*(0, \s)$);
\path[<-] ($\hb*\l*(\cp, \s)$)  edge [bend left=10] node[fill=white, scale=0.55] [midway] {$\alpha_{3}$}($\hb*2*\l*(0, \s)$);

\node at (\l, 0) {$\bullet$};
\node at (0, 0) {$\bullet$};
\node  at ($2*\l*(0.5, \s)$) {$\bullet$};
\node  at ($2*\l*(0, \s)$) {$\bullet$};
\node  at ($\l*(\cp, \s )$) {$\bullet$};
\node  at ($\l*( -0.5, \s)$) {$\bullet$};
}
\end{tikzpicture}
\] 
The corresponding gentle algebra $A(S, \ct)$ is a $3$-cycle with relations $\alpha_{2}\alpha_{1}=0$, $\alpha_{3}\alpha_{2}=0$ and $\alpha_{1}\alpha_{3}=0$. It is isomorphic to the \emph{selfinjective} algebra $I_{3}$ defined in Remark \ref{R:Main}. Hence the singularity category $\cd_{sg}(A(S, \ct))$ is triangle equivalent to the stable module category $A(S, \ct)-\ul{\mod}$, by \eqref{E:Buchweitz}. 
\end{ex}

\begin{rem}
More generally, the algebras arising as Jacobian algebras from ideal triangulations of Riemann surfaces \emph{with} punctures are often of infinite global dimension. It would be interesting to study their singularity categories and relate them to properties of the triangulation.
\end{rem}

\begin{ex} \label{E:GPs}In Example \ref{Ex:Illustrative}, the indecomposable non-projective GPs are given by:
\[
\begin{tikzpicture}[description/.style={fill=white,inner sep=2pt}]
    \matrix (n) [matrix of math nodes, row sep=0.4em,
                 column sep=1.8em, text height=1.5ex, text depth=0.25ex,
                 inner sep=0pt, nodes={inner xsep=0.3333em, inner
ysep=0.3333em}]
    {  2&3&4&7&6&5&1&2&7&8;  \\
       7&8; \\
       6&5&1&2&7&8; \\
       7&6&5&1&2&7&8; && 4; && 8. \\
       };
\draw[decoration={brace, amplitude=2mm}, decorate]($(n-4-1.west)-(13mm, -4.5mm)$) -- node[ scale=0.75, xshift=-6mm] [midway] {$c_{1}$}($(n-1-1.west) -(13mm, -2.5mm)$);

\draw[decoration={brace, amplitude=1mm}, decorate]($(n-4-1.south)-(15.5mm, 0mm)$) -- node[ scale=0.75, xshift=-6mm] [midway] {$c_{2}$}($(n-4-1.north) -(15.5mm, 0mm)$);
    
\node at ($(n-1-1)-(25pt, 0pt)$) {$R(e)=$};     
\path[->] (n-1-1) edge node[fill=white, scale=0.75, yshift=0mm] [midway] {$b$} (n-1-2);
\path[->] (n-1-2) edge node[fill=white, scale=0.75, yshift=0mm] [midway] {$c$} (n-1-3); 
\path[->] (n-1-3) edge node[fill=white, scale=0.75, yshift=0mm] [midway] {$g$} (n-1-4);
\path[->] (n-1-4) edge node[fill=white, scale=0.75, yshift=0mm] [midway] {$j$} (n-1-5); 
\path[->] (n-1-5) edge node[fill=white, scale=0.75, yshift=0mm] [midway] {$i$} (n-1-6);
\path[->] (n-1-6) edge node[fill=white, scale=0.75, yshift=0mm] [midway] {$d$} (n-1-7); 
\path[->] (n-1-7) edge node[fill=white, scale=0.75, yshift=0mm] [midway] {$a$} (n-1-8); 
\path[->] (n-1-8) edge node[fill=white, scale=0.75, yshift=0mm] [midway] {$f$} (n-1-9);
\path[->] (n-1-9) edge node[fill=white, scale=0.75, yshift=0mm] [midway] {$k$} (n-1-10); 

\node at ($(n-2-1)-(25pt, 0pt)$) {$R(f)=$};
\path[->] (n-2-1) edge node[fill=white, scale=0.75, yshift=0mm] [midway] {$k$} (n-2-2);
 
\node at ($(n-3-1)-(25pt, 0pt)$) {$R(j)=$};
\path[->] (n-3-1) edge node[fill=white, scale=0.75, yshift=0mm] [midway] {$i$} (n-3-2);
\path[->] (n-3-2) edge node[fill=white, scale=0.75, yshift=0mm] [midway] {$d$} (n-3-3); 
\path[->] (n-3-3) edge node[fill=white, scale=0.75, yshift=0mm] [midway] {$a$} (n-3-4); 
\path[->] (n-3-4) edge node[fill=white, scale=0.75, yshift=0mm] [midway] {$f$} (n-3-5);
\path[->] (n-3-5) edge node[fill=white, scale=0.75, yshift=0mm] [midway] {$k$} (n-3-6); 

\node at ($(n-4-1)-(25pt, 0pt)$) {$R(g)=$};
\path[->] (n-4-1) edge node[fill=white, scale=0.75, yshift=0mm] [midway] {$j$} (n-4-2); 
\path[->] (n-4-2) edge node[fill=white, scale=0.75, yshift=0mm] [midway] {$i$} (n-4-3);
\path[->] (n-4-3) edge node[fill=white, scale=0.75, yshift=0mm] [midway] {$d$} (n-4-4); 
\path[->] (n-4-4) edge node[fill=white, scale=0.75, yshift=0mm] [midway] {$a$} (n-4-5); 
\path[->] (n-4-5) edge node[fill=white, scale=0.75, yshift=0mm] [midway] {$f$} (n-4-6);
\path[->] (n-4-6) edge node[fill=white, scale=0.75, yshift=0mm] [midway] {$k$} (n-4-7); 

\node at ($(n-4-9)-(25pt, 0pt)$) {$R(h)=$};
\node at ($(n-4-11)-(25pt, 0pt)$) {$R(k)=$};

\end{tikzpicture}
\]
They correspond to the two cycles $c_{1}=jfe$ and $c_{2}=kgh$ in $\cc(\Lambda)$.
Theorem \ref{P:Main} yields 
\begin{align}
\cd_{sg}(\Lambda) \cong \frac{\displaystyle \cd^b(k-\mod)}{\displaystyle [3]} \oplus \frac{\displaystyle \cd^b(k-\mod)}{\displaystyle [3]}.
\end{align}
\end{ex}

\section{Proof}\label{S:Proof}
\noindent
We start with some background material on modules over gentle algebras $\Lambda=kQ/I$.  A classification of indecomposable modules over gentle algebras can be deduced from work of Ringel \cite{Ringel} (see e.g.~\cite{WW, ButlerRingel}): they are either \emph{string} or \emph{band} modules $M(w)$, where $w$ is a certain word in the alphabet $\{\alpha, \alpha^{-1} \big| \alpha \in Q_1 \}$.  Equivalently, one can consider certain quiver morphisms $\sigma\colon S \ra Q$ (for strings) and $\beta\colon B \ra Q$ (for bands), where $S$ and $B$ are of Dynkin types $\AA_{n}$ and $\widetilde{\AA}_{n}$, respectively. Then string and band modules are given as pushforwards $\sigma_{*}(M)$ and $\beta_{*}(R)$ of indecomposable $kS$-modules $M$ and indecomposable \emph{regular} $kB$-modules $R$, respectively (see e.g.~\cite{WW}).

It follows from properties (G1), (G2) \& (G4) in the Definition \ref{D:Gentle}\,Êof gentle algebras that the indecomposable projective $\Lambda$-modules are of the following form:
\begin{align}\label{E:IndProjGentle}
\begin{array}{c}
\begin{xy}
\SelectTips{cm}{}
\xymatrix@R=6pt@C=11pt{
\bullet \ar[d]_{\alpha_{1}\,} &&&&& \bullet \ar[rd]^{\, \gamma_{1}} \ar[ld]_{\beta_{1}\,}  \\
\bullet \ar@{..}[d] &&\text{or}  && \bullet  \ar@{..}[ld]&& \bullet \ar@{..}[rrdd] \\
\bullet \ar[d]_{\alpha_{l}\,} &&&\bullet \ar[ld]_{\beta_{m}\,} &&&&  \\
\bullet && \bullet &&&&&& \bullet \ar[rd]^{\, \gamma_{n}} \\
&&&& &&&& & \bullet
}
\end{xy}
\end{array}
\end{align} 
They correspond to the words $\alpha_{l}\ldots\alpha_{1}$ and $\beta_{m}\ldots\beta_{1}\gamma_{1}^{-1}\ldots\gamma_{n}^{-1}$, respectively. The definition of quiver algebras $kQ/I$ implies that the paths $\alpha_{l}\ldots\alpha_{1}$, $\beta_{m}\ldots\beta_{1}$ and $\gamma_{n}\ldots\gamma_{1}$ appearing in \eqref{E:IndProjGentle} are \emph{maximal}, e.g.~there does not exist $\alpha \in Q_1$ such that  $\alpha \alpha_l \notin I$, see for example \cite{ASS}. 

It follows from \eqref{E:IndProjGentle} that the radical $\rad P$ of an indecomposable projective $\Lambda$-module $P$ has at most two indecomposable direct summands. Moreover, \eqref{E:IndProjGentle} yields the following result about submodules of projective modules.

\begin{lem}\label{L:TechnicalV} Let $M=M(w)$ be an indecomposable $\Lambda$-module, such that $w$ contains 
\begin{align}\label{E:PictureV}
\alpha^{-1} \beta=
\begin{array}{c}
\begin{xy}
\SelectTips{cm}{}
\xymatrix@R=6pt@C=8pt{
 x \ar[rd]_(0.4){\alpha}  && z \ar[ld]^(0.4){\beta} \\
& y
}
\end{xy}
\end{array}
\end{align} 
with $\alpha \neq \beta$ as a subword. Then $M$ is not a submodule of a projective $\Lambda$-module $P$.
\end{lem}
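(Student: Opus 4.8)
The plan is to argue by contradiction. Suppose there were an injective homomorphism $f\colon M \hookrightarrow P$ with $P$ projective, and track the image under $f$ of the basis vector sitting at the valley vertex $y$. The whole obstruction to embedding into a projective is concentrated at this vertex, where two distinct arrows flow in.

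First I would set up the basis-vector description of the string module $M = M(w)$: along the walk $w$ there sit basis vectors indexed by the visited vertices, and the subword $\alpha^{-1}\beta$ drawn in \eqref{E:PictureV} produces three of them, say $v_x \in M_x$, $v_y \in M_y$, $v_z \in M_z$, with $\alpha \cdot v_x = v_y$ and $\beta \cdot v_z = v_y$. Both equalities hold with $v_y$ a nonzero basis vector, since $y$ is the common target of $\alpha$ and $\beta$.

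The heart of the argument is a structural statement about projectives: for any projective $\Lambda$-module $P$ and any two \emph{distinct} arrows $\alpha,\beta$ with common target $y$, the images of left multiplication satisfy $\alpha P \cap \beta P = 0$ inside $e_y P$. It suffices to verify this for an indecomposable $P = \Lambda e_j$, where $e_y \Lambda e_j$ has a $k$-basis given by the nonzero paths from $j$ to $y$. The subspace $\alpha \cdot (e_x \Lambda e_j)$ is spanned by exactly those basis paths whose last arrow is $\alpha$, and $\beta \cdot (e_z \Lambda e_j)$ by those whose last arrow is $\beta$; since $\alpha \neq \beta$ these two sets of basis paths are disjoint, so their spans meet only in $0$. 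Taking direct sums then covers an arbitrary projective $P$. This is precisely where the gentle hypotheses and the shape \eqref{E:IndProjGentle} of indecomposable projectives are used, and where the assumption $\alpha \neq \beta$ is indispensable: for $\alpha = \beta$ the two images coincide rather than intersecting trivially (and, consistently, a string never contains $\alpha^{-1}\alpha$).

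Finally I combine the two ingredients. Because $f$ is a $\Lambda$-homomorphism, $f(v_y) = \alpha \cdot f(v_x) \in \alpha P$ and at the same time $f(v_y) = \beta \cdot f(v_z) \in \beta P$, so $f(v_y) \in \alpha P \cap \beta P = 0$. But $v_y \neq 0$ and $f$ is injective, giving $f(v_y) \neq 0$, a contradiction. I expect the only genuinely delicate point to be the structural lemma on projectives, namely justifying that nonzero paths furnish a basis of $e_y \Lambda e_j$ and that the ``last arrow'' of a path is well defined so that the two images are spanned by disjoint basis vectors; everything else is a direct transport of the relation $\alpha v_x = v_y = \beta v_z$ across the embedding $f$.
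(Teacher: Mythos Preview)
Your argument is correct. The paper does not actually supply a proof of this lemma; it only remarks that the shape of indecomposable projectives displayed in \eqref{E:IndProjGentle} ``yields the following result about submodules of projective modules'' and then moves on. Your proof is the natural way to make that remark precise: because $I$ is generated by paths (condition (G2)), the nonzero paths form a $k$-basis of $\Lambda$, so $e_y\Lambda e_j$ has the path basis you describe, the last arrow of a nontrivial path is well defined, and hence $\alpha P\cap\beta P=0$ for $\alpha\neq\beta$. One small caveat: your setup ``$\alpha\cdot v_x=v_y$ and $\beta\cdot v_z=v_y$'' is literally correct for string modules, while for band modules (cf.\ Remark~\ref{R:TwoCases}) one of the two equalities may carry a nonzero scalar; this does not affect the argument, since you only need $f(v_y)\in\alpha P\cap\beta P$.
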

\begin{rem}\label{R:TwoCases}
In the picture \eqref{E:PictureV}, the letters $x, y, z$ represent basis vectors of the module $M$. We do not  exclude the case $x=z$. For example, the indecomposable injective module $I_{2}$ over the Kronecker quiver $\begin{xy}\SelectTips{cm}{10}\xymatrix{1 \ar@/^/[r] \ar@/_/[r] &  2}\end{xy}$ is a \emph{string} module of the form \eqref{E:PictureV}, with pairwise different basis vectors $x, y, z$. On the other hand, the indecomposable \emph{band} modules
\[\begin{xy}\SelectTips{cm}{10}\xymatrix{k \ar@/^/[rr]^1 \ar@/_/[rr]_{\lambda} &&  k}\end{xy}\]
with $\lambda \in k^*$ correspond to the same word $\alpha^{-1} \beta$ but we have to identify $x$ and $z$ in \eqref{E:PictureV}.
 \end{rem}

Throughout the proof, we use the properties (GP1) \& (GP2) of Gorenstein projective modules over Iwanaga--Gorenstein rings, which are stated in Section \ref{S:DefMain}.

\subsection{Proof of part (a)}\label{ss:ProofParta}
Let $c \in \cc(\Lambda)$ be a cycle, which we label as follows $1 \xrightarrow{\alpha_{1}} 2 \xrightarrow{\alpha_{2}}  \cdots \xrightarrow{\alpha_{n-1}} n \xrightarrow{\alpha_{n}} 1$. Then there are short exact sequences 
\begin{align}\label{E:Shift}
0 \ra  R(\alpha_{i}) \ra P_{i} \ra R(\alpha_{i-1})\ra 0,
\end{align}
for all $i=1, \ldots, n$, where we set $\alpha_0=\alpha_n$. We give an illustration of this below.
\[
\begin{array}{c}
\begin{tikzpicture}[description/.style={fill=white,inner sep=2pt}]
    \matrix (n) [matrix of math nodes, row sep=1em,
                 column sep=2.25em, text height=1.5ex, text depth=0.25ex,
                 inner sep=0pt, nodes={inner xsep=0.3333em, inner
ysep=0.3333em}]
    {  \cdots & i &   \cdots & \, \\
        \cdots & i+1 & \cdots &\, \\
    };
\draw[->] (n-1-1) edge  node[ scale=0.75, yshift=3mm] [midway] {$\alpha_{i-1}$} (n-1-2); 
\draw[->] (n-1-2) edge  node[ scale=0.75, xshift=3mm] [midway] {$\alpha_{i}$} (n-2-2); 
\draw[->] (n-2-2) edge  node[ scale=0.75, yshift=3mm] [midway] {$\alpha_{i+1}$} (n-2-1); 

\draw[->] (n-1-2) edge  node[ scale=0.75, yshift=3mm] [midway] {$\beta_{1}$} (n-1-3); 
\draw[->] (n-1-3) edge  node[ scale=0.75, yshift=3mm] [midway] {$\beta_{n}$} (n-1-4);

\draw[->] (n-2-2) edge  node[ scale=0.75, yshift=3mm] [midway] {$\gamma_{1}$} (n-2-3); 
\draw[->] (n-2-3) edge  node[ scale=0.75, yshift=3mm] [midway] {$\gamma_{m}$} (n-2-4);

\draw[decoration={brace, amplitude=2mm}, decorate] ($(n-1-2.west) +(0, 3.5mm)$) -- node[ scale=0.75, yshift=6mm] [midway] {$R(\alpha_{i-1})$} ($(n-1-4.west)+(0, 3.5mm)$); 

\draw[decoration={brace, amplitude=2mm}, decorate] ($(n-2-4.west) +(0, -2.5mm)$) -- node[ scale=0.75, yshift=-5.5mm] [midway] {$R(\alpha_{i})$} ($(n-2-2.west)+(0, -2.5mm)$); 
 
\end{tikzpicture}
\end{array}
\]
Here $n=0$ or $m=0$ are allowed.
 In particular, \eqref{E:Shift} shows that for every $m \geq 0$ and every $i$ there is a $\Lambda$-module $X$ such that $R(\alpha_{i})$ may be written as a $m$th-syzygy module $\Omega^m(X)$. Thus,  $R(\alpha_{i}) \in \GP(\Lambda)$ by (GP2). Since projective modules are GP by definition, this shows the inclusion `$\supseteq$' in (a). 

 It remains to show that there are no further indecomposable Gorenstein projective modules. By property (GP2), we only have to consider submodules of projective modules. Using Lemma \ref{L:TechnicalV}, we can exclude all modules which correspond to a word containing $\alpha^{-1} \beta$. In particular, band modules are not Gorenstein projective - the corresponding words are cyclic and always contain subwords of the form $\alpha^{-1} \beta$. 
 
We claim that an indecomposable Gorenstein projective $\Lambda$-module $M$ containing a subword of the form $\alpha \beta^{-1}$, with $\alpha \neq \beta$ is projective. We think of $\alpha \beta^{-1}$ as a `roof'
\begin{align} \label{E:roof}
\begin{array}{c}
\begin{xy}
\SelectTips{cm}{}
\xymatrix@R=6pt@C=8pt{
& \ar[ld]_{\alpha} t \ar[rd]^{\beta} \\
s&&u
}
\end{xy}
\end{array},
\end{align}
where $s, t, u$ are basis vectors of $M$, such that $\alpha \cdot t=s$ and $\beta \cdot t =u$. 

Let $U(t) \subset M$ be the submodule generated by $t \in M$. By (GP2), $M$ is a submodule of some projective module $P$. Using this in conjunction with \eqref{E:roof} and the properties (G1) \& (G4), we see that $U(t)\cong P_{v(t)}$ is projective, where $v(t) \in Q_0$ is the vertex corresponding to $t$. If $U(t) \subsetneq M$, then $M$ 
contains a subword of the form $\alpha^{-1} \beta$, with $\alpha \neq \beta$ (we note that this statement does not use the assumption that $M$ is GP). By Lemma \ref{L:TechnicalV} this cannot happen. So we see that $M = U(t) \cong P_{v(t)}$ is indeed projective.

We have reduced the set of possible indecomposable GP $\Lambda$-modules to projective modules or directed strings $S=\beta_{n} \ldots \beta_{1}$. We also allow $S$ to consists of a single `lazy' path $e_{i}$ (this corresponds to a simple module). Let $M(S)$ be the corresponding GP $\Lambda$-module. It is contained in a projective module by (GP2). If $M(S)$ is not projective, then there exists an arrow $\alpha$ such that $\beta_{n} \ldots \beta_{1}\alpha \notin I$ and  $\gamma \beta_{n} \ldots \beta_{1}\alpha \in I$ for every arrow $\gamma\in Q_{1}$.  It follows that $M(S)=R(\alpha)$ is a direct summand of the radical of $P_{s(\alpha)}$.

{\em Claim}: If $\alpha$ does not lie on a cycle $c \in \cc(\Lambda)$, then $R(\alpha)$ has finite projective dimension. 

\noindent
If $R(\alpha)$ is not projective, then the situation locally looks as follows (we allow $n$ to be zero)
\[
\begin{array}{c}
\begin{tikzpicture}[description/.style={fill=white,inner sep=2pt}]
    \matrix (n) [matrix of math nodes, row sep=1em,
                 column sep=2.25em, text height=1.5ex, text depth=0.25ex,
                 inner sep=0pt, nodes={inner xsep=0.3333em, inner
ysep=0.3333em}]
    {  
       \cdots & \sigma & \cdots  &  \bullet \\
       ^{\displaystyle \cdots} \\
    };
\draw[->] (n-1-1) edge  node[ scale=0.75, yshift=3mm] [midway] {$\alpha$} (n-1-2); 
\draw[->] (n-1-2) edge  node[ scale=0.75, yshift=3mm] [midway] {$\beta_{1}$}  (n-1-3);
\draw[->] (n-1-3) edge  node[ scale=0.75, yshift=3mm] [midway] {$\beta_{n}$}  (n-1-4);
  
\draw[->] (n-1-2)  edge  node[scale=0.75, yshift=-1mm, xshift=3mm] [midway] {$\alpha_{1}$} (n-2-1); 

\draw[decoration={brace, amplitude=2mm}, decorate] ($(n-1-4.east) +(0, -2.5mm)$) -- node[ scale=0.75, yshift=-5.5mm] [midway] {$R(\alpha)$} ($(n-1-2.west)+(0, -2.5mm)$);

\end{tikzpicture}
\end{array}
\] 
where $\alpha_{1} \alpha \in I$. (There could be another arrow ending in $\sigma$. It is omitted from the picture since it does not affect our argument.) Moreover, $\alpha_{1}$ cannot lie on a cycle in $\cc(\Lambda)$, since this would contradict our assumption on $\alpha$. As in \eqref{E:Shift}, we have a short exact sequence
\begin{align}
0 \ra R(\alpha_{1}) \rightarrow P_{\sigma} \ra R(\alpha) \ra 0.
\end{align}
 $R(\alpha_{1})$ has the same properties as $R(\alpha)$, so we may repeat our argument. After finitely many steps, one of the occuring radical summands will be projective and the procedure stops. Indeed, otherwise we get a path $\ldots\alpha_{m} \ldots \alpha_{1}\alpha$, such that every subpath of length two is contained in $I$. Since there are only finitely many arrows in $Q$, this path is a cycle. Contradiction. Hence $R(\alpha)$ has finite projective dimension. 

Combining the claim with (GP1), we see that for arrows $\alpha$, which do not lie on a cycle in $\cc(\Lambda)$, $R(\alpha)$ is GP if and only if it is projective. Summing up, we have shown that indecomposable GP modules are either projective or direct summands $R(\alpha_{i})=\Lambda \alpha_i$ of the radical of some indecomposable projective module $P_{s(\alpha_{i})}$, where $\alpha_{i}$ is contained in a cycle $c \in \cc(\Lambda)$. This proves part (a).

\subsection{Proof of part (b)}
By Buchweitz' equivalence \eqref{E:Buchweitz}, it suffices to describe the stable category $\ul{\GP}(\Lambda)$. By part (a), the indecomposable objects in this category are precisely the radical summands $R(\alpha_{i})$ for a cycle $c=\alpha_{n}\ldots \alpha_{1} \in \cc(\Lambda)$ and \eqref{E:Shift} shows that $R(\alpha_{i})[1] \cong R(\alpha_{i-1})$. In particular, $R(\alpha_{i})[l(c)] \cong R(\alpha_{i})$. We prove \begin{align}\label{E:SemiSimple} \ul{\Hom}_{\Lambda}(R(\alpha), R(\alpha'))\cong  \delta_{\alpha \alpha'} \cdot k\end{align} below. This shows that the additive category $\ul{\GP}(\Lambda)$ is equivalent to a semisimple abelian category and therefore itself semisimple abelian. It is well-known that a semisimple abelian category with autoequivalence $[1]$ admits a unique triangulated structure with shift functor $[1]$, see e.g.~\cite[Lemma 3.4.]{Chen}. This completes the proof of part (b). The remaining part of this subsection is concerned with the proof of \eqref{E:SemiSimple}. $R(\alpha)$ is given by a  string of the following form (it starts in $\sigma$ and we allow $n=0$)
\begin{align}
\begin{array}{c}
\begin{xy}
\SelectTips{cm}{}
\xymatrix{
\ar@{..>}[r]^{\alpha} & \sigma \ar[r]^{\beta_{1}} & \ldots \ar[r]^{\beta_{n}} & \bullet.
}
\end{xy}
\end{array}
\end{align} 
Here, $\alpha$ is on a cycle $c \in \cc(\Lambda)$ and $\beta_{1}\alpha \notin I$, if $n \neq 0$. If there is a non-zero morphism of $\Lambda$-modules from $R(\alpha)$ to $R(\alpha')$, then the latter has to be a string of the following form
\begin{align}
\begin{array}{c}
\begin{xy}
\SelectTips{cm}{}
\xymatrix{
R(\alpha')\colon & \ar@{..>}[r]^{\alpha'} & \sigma' \ar[r]^{\beta'_{1}} & \ldots \ar[r]^{\beta'_{m}} & \sigma \ar[r]^{\beta_{1}} & \cdots \ar[r]^{\beta_{k}} & \bullet,
}
\end{xy}
\end{array}
\end{align} 
where we allow $k=0$ or $m=0$. If both $k$ and $m$ are zero, then (G3) and the fact that $R(\alpha')$ is a submodule of an indecomposable projective $\Lambda$-module imply that there is only one arrow starting in $\sigma$ (this arrow lies on the cycle $c$). Hence, $n=0$ and therefore $R(\alpha)=R(\alpha')$. 
$k \neq 0$ and $m=0$ imply $\alpha=\alpha'$ by (G4) -- in particular, $R(\alpha)=R(\alpha')$. 

We show that in both cases $\ul{\End}_{\Lambda}(R(\alpha)) \cong k$ holds. For this, we claim that the simple module $S_{\sigma}$ can appear (at most) twice as a composition factor of $R(\alpha)$. Indeed (G2) and the finite dimensionality of $\Lambda$ imply that every arrow of $Q$ appears at most once in the path defining $R(\alpha)$ and the arrow $\alpha$ itself does not appear at all. Now, using (G1) there is at most one arrow ending in $\sigma$ which is different from $\alpha$. This completes the proof of the claim.  
However, if $S_\sigma$ occurs twice as a composition factor, then $R(\alpha)$ locally has the following form 
\[
\sigma \xrightarrow{\beta_{1}} \cdots \xrightarrow{\neq \alpha} \sigma \xrightarrow{\gamma} \cdots \ra \bullet,
\]
where $\gamma \neq \beta_{1}$ lies on the cycle $c$, because $\alpha$ lies on this cycle with full relations, see Example \ref{E:GPs} for an illustration. In particular, this does not yield additional endomorphisms.

If $k \neq 0$ and $m \neq 0$, then it follows from (G4) that $\beta'_{m}=\alpha$. 
If $k =0$, $m \neq 0$ and $\beta'_{m} \neq\alpha$, then there are two different arrows ending in $\sigma$. Since $\alpha$ is on a cycle there is an arrow $\gamma\colon \sigma \ra \bullet$, such that $\gamma\alpha \in I$. It follows from (G3) that $\gamma \beta'_{m} \notin I$. Since $R(\alpha')=\Lambda\alpha'$ is a left ideal, the path starting in $\sigma'$ has to be maximal. In particular, it does not end in $\sigma$. Contradiction. So we again have $\beta'_{m}=\alpha$.

In both cases our morphism factors over a projective module
\begin{align}
R(\alpha) \ra P_{s(\alpha)} \ra R(\alpha')
\end{align}
and therefore $\ul{\Hom}_{\Lambda}\bigl(R(\alpha), R(\alpha')\bigr)=0$, see Example \ref{Ex:Continued} for an illustration of this case. This completes the proof.

\medskip
\noindent
\emph{Acknowledgement}. We would like to thank Jan Schr\"oer for asking us about a description of singularity categories for gentle algebras, Daniel Labardini-Fragoso for sharing his knowledge about algebras arising from surface triangulations and Igor Burban for pointing out the relation with $\mathbb{A}_{n}$-configurations of projective lines and helpful advice. We are grateful to Julian K\"ulshammer for reading a previous version of this note and for pointing out an inaccuracy in the definition of $\cc(\Lambda)$. We would like to thank the referees for carefully reading the manuscript and for making several useful suggestions, which led to an improvement of the text.

\end{document}